\newtheorem{theorem}{Theorem}[section]
\newtheorem{definition}[theorem]{Definition}
\newtheorem{lemma}[theorem]{Lemma}
\newtheorem{corollary}[theorem]{Corollary}
\newtheorem{remark}[theorem]{Remark}
\numberwithin{equation}{section}
\newcommand{\R}{\mathbb{R}}
\newcommand{\N}{\mathbb{N}}
\newcommand{\UH}{\mathbb{H}}
\newcommand{\mon}{\operatorname{Mon}(\pi)}
\newcommand{\pslz}{\operatorname{PSL}_2(\mathbb{Z})}
\newcommand{\pslr}{\operatorname{PSL}_2(\mathbb{R})}
\newcommand{\kernel}{\operatorname{ker}(\varphi)}
\newlength\tindent
\title{Explicit Constructions of Finite Groups as Monodromy Groups}
\author{Ra-Zakee Muhammad, Javier Santiago and Eyob Tsegaye}
\address{Department of Mathematics, Pomona College, 333 N. College Way
Claremont, CA 91711}
\email{rmab2018@mymail.pomona.edu}
\address{Department of Computer Science,
University of Puerto Rico, Río Piedras,
17 Ave Universidad, Ste 1701, 
San Juan, Puerto Rico 00925-2537}
\email{javier.santiago16@upr.edu}
\address{Deparment of Mathematics, Stanford University, Building 380, Stanford, California 94305}
\email{etsegay@stanford.edu}
\begin{document}

\maketitle

\begin{abstract} 
In 1963, Greenberg proved that every finite group appears as the monodromy group of some morphism of Riemann surfaces. In this paper, we give two constructive proofs of Greenberg's result. First, we utilize free groups, which given with the universal property and their construction as discrete subgroups of $\pslr$, yield a very natural realization of finite groups as monodromy groups. We also give a proof of Greenberg's result based on triangle groups $\Delta(m, n, k)$. Given any finite group $G$, we make use of subgroups of $\Delta(m, n, k)$ in order to explicitly find a morphism $\pi$ such that $G \simeq \mon$.
\end{abstract}

\section{Introduction}

Greenberg's celebrated result \cite{greenberg1963maximal} from 1963 shows than any finite group can be realized as the monodromy group of some morphism of Riemann surfaces.  Over $40$ years later, in \cite{wolfart2005abc}, Wolfart provided a method to make Greenberg's ideas explicit for finite cyclic groups. Specifically, he utilizes surjections from triangle groups in order to construct the  cyclic group as a monodromy group. Inspired by Wolfart, in this paper we seek to make Greenberg's result explicit for all finite groups. That is, given any finite group $G$, our goal is to find an explicit morphism $\pi$ with monodromy group $\mon$ such that $G \simeq \mon$. On that account, we will be presenting two constructive proofs of Greenberg's classical result. 

We first present a natural construction arising from the theory of free groups, where we make use of the appearance of all groups $G$ as quotient groups of free groups  \cite{dummit2004abstract}, as well as the identification of the free group of rank $2$ with the congruence subgroup $\Gamma(2)$ \cite{girondo2012introduction}.
Then we give a proof of our main result, which is a construction which utilizes surjections from appropriate subgroups of triangle groups. The formal construction is as follows:

\begin{theorem} \label{mainResultTriangle}
Let $S_n$ be the symmetric group of degree $n$ and let $\varphi \colon \Delta(2, n, n-1) \to S_n$ be the homomorphism given by
\begin{equation*}
    \gamma_0 \mapsto (1 \; 2), \quad \gamma_1 \mapsto (1 \; 2 \; \cdots \; n), \quad \gamma_{\infty} \mapsto (n \; \cdots \; 3 \; 2).
\end{equation*}

Then, 

\begin{enumerate}[\normalfont(i)]
    \item $\varphi$ is a surjective group homomorphism. Moreover, if $n < 5$, then $\varphi$ is an isomorphism.
    
    \item If $G$ is a subgroup of $S_n$, then  $\Gamma = \varphi^{-1}(G)$ and $\Gamma^\prime = \kernel$ are both Fuchsian groups.
    
    \item $G \simeq \mon$, where $\pi \colon \Gamma^\prime \backslash \UH \to \Gamma \backslash \UH$ is the canonical projection and $G$, $\Gamma$, and $\Gamma'$ are as stated in (ii).
\end{enumerate}

\end{theorem}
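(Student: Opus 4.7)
The plan is to handle the three parts in sequence, leaning on the standard presentation $\Delta(2, n, n-1) = \langle \gamma_0, \gamma_1, \gamma_\infty \mid \gamma_0^2 = \gamma_1^n = \gamma_\infty^{n-1} = \gamma_0 \gamma_1 \gamma_\infty = 1 \rangle$ together with the dictionary (presumably set up earlier in the paper) between finite-index subgroups of Fuchsian groups and branched covers of Riemann surfaces. For (i), I will first show $\varphi$ is a well-defined homomorphism by checking the four defining relations on the images in $S_n$. The order relations $\gamma_0^2 = \gamma_1^n = \gamma_\infty^{n-1} = 1$ are immediate from the cycle lengths of $(1\;2)$, $(1\;2\;\cdots\;n)$, and $(n\;\cdots\;3\;2)$. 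The last relation reduces to the short calculation $(1\;2)(1\;2\;\cdots\;n)(n\;\cdots\;3\;2) = \mathrm{id}$, traced by following each $k \in \{1,\dots,n\}$ through the three cycles. Surjectivity then follows from the classical fact that a transposition $(1\;2)$ together with the long cycle $(1\;2\;\cdots\;n)$ generate all of $S_n$, and both lie in the image.

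For the isomorphism claim when $n < 5$, I note that $(2, n, n-1)$ satisfies $\frac{1}{2} + \frac{1}{n} + \frac{1}{n-1} \geq 1$, placing us in the spherical (finite) case, with $|\Delta(2,n,n-1)| = n!$ for $n \in \{2,3,4\}$ by the classification of spherical triangle groups (e.g.\ $\Delta(2,3,2) \cong S_3$ and $\Delta(2,3,4) \cong S_4$). Since $\varphi$ is a surjection between finite groups of equal order, it is an isomorphism. For (ii), I invoke the fact that any finite-index subgroup of a Fuchsian group is Fuchsian; since $[\Delta(2, n, n-1) : \Gamma'] = |S_n|$ and $[\Delta(2, n, n-1) : \Gamma] = [S_n : G]$ are both finite, the claim follows once one knows $\Delta(2, n, n-1)$ is Fuchsian, which holds for $n \geq 5$ (the substantive range, where the triple is hyperbolic).

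For (iii), I will use the principle that if $\Gamma' \trianglelefteq \Gamma$ is a normal finite-index subgroup of a Fuchsian group, then $\pi \colon \Gamma' \backslash \UH \to \Gamma \backslash \UH$ is a Galois (i.e.\ regular) branched cover of Riemann surfaces whose monodromy group is isomorphic to the deck group $\Gamma / \Gamma'$. Here $\Gamma' = \ker \varphi$ is normal in the full triangle group and hence in $\Gamma$; the first isomorphism theorem applied to the restriction $\varphi|_{\Gamma} \colon \Gamma \twoheadrightarrow G$ yields $\Gamma / \Gamma' \cong G$, giving $\mathrm{Mon}(\pi) \cong G$.

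The main obstacle is the monodromy--deck-group identification in part (iii): it rests squarely on the general correspondence recalled earlier in the paper, so the proof is really a matter of exhibiting the normality of $\Gamma'$ in $\Gamma$ and invoking that machinery cleanly. Everything else is either an explicit calculation (the $\gamma_0 \gamma_1 \gamma_\infty = 1$ relation) or a routine appeal to finite-index closure, generation of $S_n$, and the orders of the small spherical triangle groups.
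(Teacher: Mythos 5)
Your proposal is correct and follows essentially the same route as the paper: surjectivity via the generators $(1\;2)$ and $(1\;2\;\cdots\;n)$, the identification of the spherical triangle groups $\Delta(2,n,n-1)$ with $S_n$ for $n<5$, Fuchsian-ness inherited from the ambient triangle group, and the first isomorphism theorem applied to $\varphi|_{\Gamma}$ combined with the monodromy--quotient correspondence for normal Fuchsian subgroups. Your explicit check of the relation $\gamma_0\gamma_1\gamma_\infty \mapsto \mathrm{id}$ is a welcome detail the paper leaves implicit, but it does not change the argument.
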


Note that by Cayley's theorem, we may identify any finite group $G$ as a subgroup of $S_n$ for appropriate $n$. The organization of the paper is the following. In \Cref{backgroundSection}, we define the concepts of Fuchsian groups and triangle groups, in addition to giving a description of monodromy in simple algebraic terms. Moreover, we also go into detail on the explicit constructions of free groups $F_r$ as subgroups of $\Gamma(2)$. In \Cref{constructions}, we then give both constructive proofs of Greenberg's result. 

\section{Preliminaries} \label{backgroundSection}

Recall $\operatorname{SL}_2(\R)$, as a topological space, can be naturally identified with a subset of $\R^4$. Consequently, $\pslr$ is endowed with a quotient topology. In this paper, we will be interested in subgroups $\Gamma \leq \pslr$ such that the topology induced in $\Gamma$ is discrete. 

\begin{definition} \cite{girondo2012introduction}
A \textbf{Fuchsian group} is a subgroup $\Gamma \leq \pslr$ such that for all $\gamma \in \Gamma$ there is a neighbourhood $V$ of $\gamma$ in $\operatorname{PSL}_2(\mathbb{R})$ with $V \cap \Gamma = \{ \gamma \}$. 
\end{definition}

By definition, it is clear that if $\Gamma$ is a Fuchsian group and $\Gamma^\prime \leq \Gamma$, then $\Gamma^\prime$ is also a Fuchsian group. This will be relevant when working with subgroups of $\pslz$, since $\pslz$ is a discrete subgroup of $\pslr$. For more details regarding Fuchsian groups, see \cite{katok1992fuchsian, jones1987complex, garcia2001topics}.

As mentioned before, the goal of this paper is to explicitly realize any finite group $G$ as the monodromy group of some branched cover. Now, on one hand, given a branched cover of Riemann surfaces, we may realize the monodromy group by lifting paths around the branch points. However, we may also take an algebraic point of view. In \cite{girondo2012introduction}, we see that monodromy groups can also be explicitly constructed from Fuchsian groups. Namely, given two Fuchsian groups $\Gamma^\prime, \Gamma$ with $\Gamma^\prime \leq \Gamma$, we have $\displaystyle{\frac{\Gamma}{\bigcap_{\gamma \in \Gamma} \gamma^{-1} \Gamma^\prime \gamma} \simeq \mon}$, where $\pi \colon \Gamma^\prime \backslash \UH \to \Gamma \backslash \UH$. Furthermore, $\displaystyle{\bigcap_{\gamma \in \Gamma} \gamma^{-1} \Gamma^\prime \gamma}$ is the largest normal subgroup of $\Gamma$ that is contained in $\Gamma^\prime$ (also called the normal core). Therefore, if $\Gamma^\prime$ is a normal subgroup of $\Gamma$, we have the following simple description of $\mon$.

\begin{corollary} \label{normalIntersection} \cite[Corollary 2.59]{girondo2012introduction} Let $\Gamma^\prime, \Gamma$ be Fuchsian groups. If $\Gamma^\prime \trianglelefteq \Gamma$, then $\Gamma / \Gamma^\prime \simeq \mon$, where $\pi \colon \Gamma^\prime \backslash \UH \to \Gamma \backslash \UH$.
\end{corollary}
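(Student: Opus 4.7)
The plan is to deduce this corollary as an immediate specialization of the general identification
\[
\frac{\Gamma}{\bigcap_{\gamma \in \Gamma} \gamma^{-1}\Gamma'\gamma} \;\simeq\; \mon,
\]
which was recorded in the paragraph preceding the statement (and proved in \cite{girondo2012introduction}). The only work is to verify that when $\Gamma'$ is assumed normal in $\Gamma$, the intersection in the denominator collapses to $\Gamma'$ itself.

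First I would set $N := \bigcap_{\gamma \in \Gamma} \gamma^{-1}\Gamma'\gamma$ and recall the standard group-theoretic fact that, with no hypothesis on $\Gamma'$, the subgroup $N$ is exactly the normal core of $\Gamma'$ in $\Gamma$, i.e., the largest normal subgroup of $\Gamma$ contained in $\Gamma'$ (this is the identification already invoked in the paragraph preceding the corollary). Next, under the hypothesis $\Gamma' \trianglelefteq \Gamma$, every conjugate satisfies $\gamma^{-1}\Gamma'\gamma = \Gamma'$, so each term of the intersection is equal to $\Gamma'$, giving $N = \Gamma'$. Equivalently, one can argue by maximality: $\Gamma'$ is itself a normal subgroup of $\Gamma$ contained in $\Gamma'$, and nothing strictly larger than $\Gamma'$ is contained in $\Gamma'$, so the normal core must be $\Gamma'$ itself.

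Substituting $N = \Gamma'$ into the displayed formula yields $\Gamma/\Gamma' \simeq \mon$, which is the statement of the corollary. There is essentially no obstacle here: the result is a one-step specialization of a fact already in hand, and the normality hypothesis is used precisely to make the normal core trivially computable and to ensure that $\Gamma/\Gamma'$ is a bona fide group rather than merely a coset space. The only peripheral check is that $\Gamma'$ is indeed a Fuchsian group in its own right so that the quotient $\Gamma' \backslash \UH$ and the canonical projection $\pi \colon \Gamma' \backslash \UH \to \Gamma \backslash \UH$ are meaningful — but this is automatic, since subgroups of Fuchsian groups are Fuchsian, as observed in the excerpt immediately after the definition of a Fuchsian group.
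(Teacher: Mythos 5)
Your proposal is correct and follows exactly the route the paper itself takes: it specializes the general identification $\Gamma/\bigcap_{\gamma \in \Gamma}\gamma^{-1}\Gamma'\gamma \simeq \mon$ from the preceding paragraph by observing that the normal core of a normal subgroup $\Gamma'$ is $\Gamma'$ itself. Nothing is missing, and your peripheral check that $\Gamma'$ is Fuchsian matches the paper's earlier remark that subgroups of Fuchsian groups are Fuchsian.
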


Making Greenberg's result explicit is then a purely algebraic question: Given any finite group $G$, can we find Fuchsian groups $\Gamma^\prime, \Gamma$ with $\Gamma^\prime \trianglelefteq \Gamma$ such that $G \simeq \Gamma / \Gamma^\prime$? \Cref{mainResultFree} and \Cref{mainResultTriangle} provide two separate constructions that answer this question.

In \Cref{mainResultFree}, we take $\Gamma$ to be a free group, whose rank depends on the number of generators of the finite group $G$. This construction is based on both the fact that $\Gamma(2)$ is a free group generated by $A = \pm \begin{bmatrix} 1 & 2 \\ 0 & 1 \end{bmatrix}, B = \pm \begin{bmatrix} 1 & 0 \\ 2 & 1 \end{bmatrix} \in \pslz$ \cite{birch1994theory}, as well as the classical result that the free group of rank two contains freely generated subgroups of rank $r$ for any $r \in \N$. The following lemma gives an explicit construction of free groups of rank $r$ as subgroups of $\Gamma(2)$.

\begin{lemma} \label{frContainsFtwo} \cite[Proposition 7.80]{drutu2018geometric} Let $A = \pm \begin{bmatrix} 1 & 2 \\ 0 & 1 \end{bmatrix}$ and $B = \pm \begin{bmatrix} 1 & 0 \\ 2 & 1 \end{bmatrix}$ and let $F_r = \langle \; B^{-j}AB^j \; \vert \; j = 0, 1, \dots, r-1 \; \rangle$. Then $F_r$ is a free subgroup of rank $r$ in  $\Gamma(2)$.
\end{lemma}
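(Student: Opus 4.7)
The plan is to invoke the cited fact that $\Gamma(2)$ is itself freely generated by $A$ and $B$, so that the problem reduces to a purely combinatorial statement in the abstract free group $F_2 = \langle a, b \rangle$: I must show that the elements $c_j := b^{-j} a b^j$, for $j = 0, 1, \ldots, r-1$, freely generate a subgroup of rank $r$. Once this is established, transporting the result back along the isomorphism $F_2 \simeq \Gamma(2)$ (where $a \leftrightarrow A$, $b \leftrightarrow B$) gives the lemma. Note that as a subgroup of the Fuchsian (in particular, discrete) group $\Gamma(2) \leq \pslz$, $F_r$ is automatically Fuchsian, so the only real content is freeness of rank $r$.

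To prove that the $c_j$ freely generate, I would take an arbitrary non-empty reduced word
\begin{equation*}
    w = c_{j_1}^{\epsilon_1} c_{j_2}^{\epsilon_2} \cdots c_{j_k}^{\epsilon_k}, \qquad \epsilon_i \in \{+1,-1\},
\end{equation*}
where ``reduced'' means that whenever $j_i = j_{i+1}$ we have $\epsilon_i = \epsilon_{i+1}$ (otherwise two adjacent letters would be inverses and could be cancelled). Expanding the definition of $c_{j_i}$ yields
\begin{equation*}
    w = b^{-j_1}\, a^{\epsilon_1}\, b^{\,j_1 - j_2}\, a^{\epsilon_2}\, b^{\,j_2 - j_3} \cdots b^{\,j_{k-1}-j_k}\, a^{\epsilon_k}\, b^{\,j_k}.
\end{equation*}
I would then show this expression is non-trivial in $F_2$ by grouping maximal runs of consecutive indices $j_i$ that coincide. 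Inside such a run all $\epsilon_i$ are equal by reducedness, so the intermediate $b^0$ factors disappear and the $a$'s combine into a single $a^{m}$ with $m \neq 0$; between distinct runs the intermediate exponent $j_{i} - j_{i+1}$ is a non-zero integer, so no $b$-cancellation occurs. The net result is a word in $a$ and $b$ that is manifestly reduced and non-empty, hence non-identity in $F_2$. This shows the natural homomorphism from the abstract free group of rank $r$ to $\langle c_0, \ldots, c_{r-1} \rangle$ has trivial kernel, and therefore $F_r$ has rank exactly $r$.

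The step I expect to be the main obstacle is the bookkeeping in the expansion argument, specifically making the notion of ``reduced word in the $c_j$'' interact cleanly with reduction in $F_2$. The subtlety is the case $j_i = j_{i+1}$: here the $b$-block between $a^{\epsilon_i}$ and $a^{\epsilon_{i+1}}$ vanishes, and the $a$'s do merge, so one must verify that the hypothesis $\epsilon_i = \epsilon_{i+1}$ (forced by reducedness) prevents the merged $a$-exponent from collapsing to zero. Once this case analysis is handled by the run-grouping above, the rest is routine, and invoking the isomorphism $F_2 \simeq \Gamma(2)$ from the cited identification of $\Gamma(2)$ finishes the proof.
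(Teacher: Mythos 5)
Your proof is correct and takes essentially the same route as the paper, which simply observes that any reduced word in the conjugates $B^{-j}AB^{j}$ is non-trivial and defers the details to the cited Proposition 7.80 of Dru\c{t}u--Kapovich. Your run-grouping argument correctly fills in those details, including the only delicate case ($j_i = j_{i+1}$ forcing the merged $a$-exponent to be non-zero).
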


For \Cref{mainResultTriangle} we choose $\Gamma$ to be a partiular subgroup of a triangle group.

\begin{definition}
Let $m, n, k \in \N$. Then a group with presentation

\begin{equation*}
    \Delta(m, n, k) = \langle \gamma_0, \gamma_1, \gamma_{\infty} \; \vert \; \gamma_0^m = \gamma_1^n = \gamma_{\infty}^k = \gamma_0 \gamma_1 \gamma_{\infty} = 1 \rangle
\end{equation*}

is called a \textbf{triangle group}.
\end{definition}

An important result regarding triangle groups is the following: 

\begin{theorem}\cite{beardon2012geometry} \label{triangleFuchsian}
Let $m, n, k \in \N$. Then the triangle group $\Delta(m, n, k)$ is a Fuchsian group. 
\end{theorem}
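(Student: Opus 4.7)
The plan is to exhibit $\Delta(m,n,k)$ as a concrete discrete subgroup of $\operatorname{PSL}_2(\mathbb{R})$ by realizing it as the orientation-preserving symmetry group of a tessellation of $\UH$ by hyperbolic triangles. I will focus on the hyperbolic case $\tfrac{1}{m}+\tfrac{1}{n}+\tfrac{1}{k}<1$, which is the setting of interest for the paper (in the spherical case the group is finite and in the Euclidean case it acts on $\mathbb{C}$; these are separately understood and can be cited directly from \cite{beardon2012geometry}).

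First I would invoke the basic fact from hyperbolic trigonometry that, given any triple of positive angles summing to less than $\pi$, there is a geodesic triangle $T\subset\UH$, unique up to isometry, with exactly those interior angles; I take $T$ with angles $\pi/m$, $\pi/n$, $\pi/k$ at vertices $v_0,v_1,v_\infty$. Let $\rho_0,\rho_1,\rho_\infty\in\operatorname{Isom}(\UH)$ be the (orientation-reversing) reflections in the sides of $T$ opposite $v_0,v_1,v_\infty$, and set
\begin{equation*}
\gamma_0=\rho_\infty\rho_1,\qquad \gamma_1=\rho_0\rho_\infty,\qquad \gamma_\infty=\rho_1\rho_0.
\end{equation*}
Each $\gamma_i$ is an orientation-preserving element of $\operatorname{PSL}_2(\mathbb{R})$; it is a rotation about $v_i$ through twice the angle between the two reflecting geodesics, so $\gamma_0^m=\gamma_1^n=\gamma_\infty^k=1$, and directly $\gamma_0\gamma_1\gamma_\infty=(\rho_\infty\rho_1)(\rho_0\rho_\infty)(\rho_1\rho_0)=1$ after cancellation. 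Hence the assignment $\gamma_i\mapsto\gamma_i$ gives a well-defined homomorphism from the abstract group $\Delta(m,n,k)$ onto $\Gamma:=\langle\gamma_0,\gamma_1,\gamma_\infty\rangle\leq\operatorname{PSL}_2(\mathbb{R})$.

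The heart of the proof is to upgrade this surjection to an isomorphism and to establish that $\Gamma$ is discrete. For this I would apply Poincar\'e's polygon theorem to the quadrilateral $Q:=T\cup\rho_0(T)$, viewed as a fundamental domain with edge identifications induced by $\gamma_1$ and $\gamma_\infty$ (and vertex cycles corresponding to the rotations $\gamma_0,\gamma_1,\gamma_\infty$). The hypotheses of Poincar\'e's theorem require that the total angle around each vertex cycle be $2\pi/(\text{integer})$; by construction these sums are $2\cdot\pi/m$, $2\cdot\pi/n$, $2\cdot\pi/k$, giving integer orders $m,n,k$. The theorem then yields three conclusions simultaneously: (a) $\Gamma$ acts properly discontinuously on $\UH$, so $\Gamma$ is discrete, i.e.\ Fuchsian; (b) $Q$ is a fundamental domain for $\Gamma$; and (c) $\Gamma$ admits the presentation
\begin{equation*}
\langle\gamma_0,\gamma_1,\gamma_\infty\mid \gamma_0^m=\gamma_1^n=\gamma_\infty^k=\gamma_0\gamma_1\gamma_\infty=1\rangle,
\end{equation*}
which is exactly the presentation of $\Delta(m,n,k)$. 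Therefore the surjection $\Delta(m,n,k)\twoheadrightarrow\Gamma$ is an isomorphism and $\Delta(m,n,k)$ is realized as a Fuchsian group.

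The main obstacle is Poincar\'e's polygon theorem itself, which is a substantial classical result; I would quote it from \cite{beardon2012geometry} or \cite{katok1992fuchsian} rather than reprove it, and the work on my side reduces to carefully verifying its three hypotheses (edge pairings are isometric, vertex cycle angles divide $2\pi$, and the polygon is convex) — each of which follows immediately from the symmetric construction of $T$ and the choice of $\rho_i$. A secondary, purely bookkeeping obstacle is matching the cyclic orientation of the abstract generators $\gamma_0,\gamma_1,\gamma_\infty$ with the side-pairing conventions of Poincar\'e's theorem so that the relation $\gamma_0\gamma_1\gamma_\infty=1$ is realized as the correct vertex cycle.
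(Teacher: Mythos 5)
The paper offers no proof of this theorem at all --- it is imported wholesale from \cite{beardon2012geometry} --- so there is nothing internal to compare against; what you have written is the standard argument from that reference (and from \cite{katok1992fuchsian}): realize the generators as products of two reflections in the sides of a hyperbolic triangle with angles $\pi/m,\pi/n,\pi/k$, then apply Poincar\'e's polygon theorem to the doubled triangle $Q=T\cup\rho_0(T)$ to obtain discreteness and the presentation in one stroke. The overall strategy is correct and is the right way to make the citation self-contained. Two points, however, need repair.

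First, with the generators as you wrote them the defining relation fails: $\gamma_0\gamma_1\gamma_\infty=(\rho_\infty\rho_1)(\rho_0\rho_\infty)(\rho_1\rho_0)=(\rho_\infty\rho_1\rho_0)^2$, and there is no cancellation --- the product of the three side reflections of a nondegenerate triangle is a glide reflection, so its square is a nontrivial translation, not the identity. Each of your $\gamma_i$ is indeed a rotation about $v_i$, but for the product to telescope consecutive factors must share a mirror; take instead $\gamma_0=\rho_1\rho_\infty$, $\gamma_1=\rho_\infty\rho_0$, $\gamma_\infty=\rho_0\rho_1$, so that $\gamma_0\gamma_1\gamma_\infty=\rho_1(\rho_\infty\rho_\infty)(\rho_0\rho_0)\rho_1=1$. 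You flag this as a ``bookkeeping obstacle'' at the end, but as written the homomorphism $\Delta(m,n,k)\to\Gamma$ is not well defined, so the bookkeeping must actually be done.

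Second, the restriction to $\tfrac{1}{m}+\tfrac{1}{n}+\tfrac{1}{k}<1$ cannot be cited away, because outside that range the statement is not merely ``separately understood'' --- it is false under the paper's definition of a Fuchsian group as a discrete subgroup of $\pslr$. Every finite subgroup of $\pslr$ is cyclic (it is conjugate into the stabilizer of a point of $\UH$), so a spherical triangle group such as $\Delta(2,4,3)\simeq S_4$ does not embed in $\pslr$ at all; the Euclidean triples fail similarly since they contain $\Z^2$, which is not a discrete subgroup of $\pslr$ either. This matters for the paper itself, which invokes the theorem for $\Delta(2,n,n-1)$ including the spherical cases $n<5$. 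Your proof should therefore state the hyperbolic hypothesis as part of the theorem rather than as a simplifying assumption.
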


\begin{remark}\cite{girondo2012introduction}
Notice $\Gamma(2)$ can be identified with the triangle group $\Delta(\infty, \infty, \infty)$.
\end{remark}

In both \Cref{mainResultFree} and \Cref{mainResultTriangle}, we will construct a surjective homomorphism $\varphi$ from $\Gamma$ onto the finite group $G$, and $\kernel$ will be the natural choice for the normal subgroup $\Gamma^\prime$.

\section{Constructions} \label{constructions}

In this section, we will be using \Cref{normalIntersection} to explicitly realize finite groups $G$ as monodromy groups. As discussed in \Cref{backgroundSection}, given a finite group $G$, our approach is to find a Fuchsian group $\Gamma$ and a surjective homomorphism $\varphi \colon \Gamma \to G$ such that $G \simeq \Gamma / \Gamma^\prime$, where $\Gamma^\prime = \kernel$. The kernel of such a homomorphism $\varphi$ is then a normal Fuchsian subgroup of $\Gamma$. As a result, we can then use \Cref{normalIntersection} to conclude $G \simeq \mon$, where $\pi \colon \Gamma^\prime \backslash \UH \to  \Gamma \backslash \UH$ is the canonical map. Two separate constructions will be provided, one with free groups and one with subgroups of triangle groups.

\subsection{Free group construction} \label{freeSection}

From group theory, we have that any group $G$ is isomorphic to a quotient group of some free groups. Here, we will be using the structure of groups as quotients of free groups in order to explicitly express finite groups as monodromy groups.

Consider both $\Gamma(2) = \langle A, B \rangle$ and $F_r$ as constructed in \Cref{frContainsFtwo}.

\begin{theorem} \label{mainResultFree}
Let $G$ be an $r$-generated finite group, $\Gamma = F_r$ and $\varphi$ a surjective group homomorphism $\varphi \colon \Gamma \to G$ with $\Gamma' = \ker\varphi$. Then, 

\begin{enumerate}[\normalfont(i)]

    \item $\Gamma$ and $\Gamma^\prime$ are both Fuchsian groups. Moreover, $\Gamma^\prime$ is a free group of rank $1 + \lvert G \rvert(r-1)$.
    
    \item $G \simeq \mon$, where $\pi \colon \Gamma^\prime \backslash \UH \to \Gamma \backslash \UH$ is the canonical projection.
\end{enumerate}

\end{theorem}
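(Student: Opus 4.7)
The proof of \Cref{mainResultFree} breaks into checking that $\Gamma$ and $\Gamma'$ are Fuchsian, computing the rank of $\Gamma'$, and then invoking \Cref{normalIntersection} for the monodromy identification. All three pieces should follow from standard facts once the setup is unpacked.

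My plan for (i) is as follows. First, I observe that $\Gamma=F_r$ sits inside $\Gamma(2)\le\pslz$ by \Cref{frContainsFtwo}. Since $\pslz$ is a discrete subgroup of $\pslr$, any subgroup of it inherits the discrete topology, so $\Gamma$ is Fuchsian. Because $\Gamma'=\kernel$ is by construction a subgroup of $\Gamma$, it too is Fuchsian by the remark after the definition of Fuchsian group. For the rank, I would apply the Nielsen--Schreier formula: if $H$ is a subgroup of index $m$ in a free group of rank $n$, then $H$ is free of rank $1+m(n-1)$. Here $\Gamma$ is free of rank $r$, and since $\varphi$ is surjective with codomain $G$, the first isomorphism theorem gives $[\Gamma:\Gamma']=|G|$. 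Substituting yields the claimed rank $1+|G|(r-1)$.

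For (ii), the work is essentially bookkeeping. The kernel $\Gamma'$ is normal in $\Gamma$ by definition, and $\Gamma/\Gamma'\simeq G$ again by the first isomorphism theorem applied to the surjection $\varphi$. Since $\Gamma$ and $\Gamma'$ have been shown to be Fuchsian in (i) with $\Gamma'\trianglelefteq\Gamma$, \Cref{normalIntersection} applies directly and gives $\mon\simeq\Gamma/\Gamma'\simeq G$ for the canonical projection $\pi\colon\Gamma'\backslash\UH\to\Gamma\backslash\UH$.

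The main obstacle, if any, is merely ensuring the hypotheses of the supporting results are met rather than any genuine computation: the surjection $\varphi$ exists precisely because $G$ is assumed to be $r$-generated, so one can send a free basis of $F_r$ onto any chosen generating set of $G$. Once that is noted, the argument is a direct application of \Cref{frContainsFtwo}, Nielsen--Schreier, and \Cref{normalIntersection}, with no delicate estimates or case analysis required.
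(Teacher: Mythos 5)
Your proposal is correct and follows essentially the same route as the paper's own proof: embed $F_r$ in $\Gamma(2)\le\pslz$ via \Cref{frContainsFtwo} to get discreteness, apply the Schreier index formula with $[\Gamma:\Gamma']=|G|$ for the rank, and invoke \Cref{normalIntersection} together with the first isomorphism theorem for the monodromy identification. Your added remarks on why the index equals $|G|$ and why the surjection $\varphi$ exists are slightly more explicit than the paper's write-up but do not change the argument.
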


\begin{proof} 
$(i)$ By the construction in \Cref{frContainsFtwo}, we see that $\Gamma \leq \pslz$, and thus, it is a Fuchsian group. Since $\Gamma^\prime = \kernel$ is a subgroup of the Fuchsian group $\Gamma$, it then follows that $\Gamma^\prime$ is also a Fuchsian group. Notice further, by the Schreier index formula \cite{serretrees}, the rank of $\Gamma^\prime$ is $1 + \lvert G \rvert(r-1)$. \\
$(ii)$ Since $\Gamma^\prime, \Gamma$ are Fuchsian groups, we then have the canonical map between Riemann surfaces $\pi \colon \Gamma^\prime \backslash \UH \to \Gamma \backslash \UH$. Consequently, given that $\varphi$ is surjective, we have $G \simeq \Gamma / \Gamma^\prime$, and from \Cref{normalIntersection} we can conclude $G \simeq \mon$.
\end{proof}

\begin{remark}
Given the group presentation for $G$, then $\kernel$ can be seen as smallest normal subgroup containing the relations in $G$.
\end{remark}

Recall that the free group $\Gamma(2)$ can be identified with the triangle group $\Delta(\infty, \infty, \infty)$. Thus, in \Cref{mainResultFree}, we can view $\Gamma, \Gamma^\prime$ as subgroups of $\Delta(\infty, \infty, \infty)$. We now give another proof of Greenberg's result, this time using subgroups of the triangle group $\Delta(m,n,k)$ with $m,n,k$ finite.

\subsection{Triangle group construction} \label{triangleSection}

The specific triangle group we will be working with is $\Delta(2, n, n-1)$. As stated in \Cref{mainResultTriangle}, let $$\varphi \colon \Delta(2, n, n-1) \to S_n$$ be the homomorphism given by 

\begin{equation*}
    \gamma_0 \mapsto (1 \; 2), \quad \gamma_1 \mapsto (1 \; 2 \; \cdots \; n), \quad \gamma_{\infty} \mapsto (n \; \cdots \; 3 \; 2).
\end{equation*}

\begin{proof}[Proof of \Cref{mainResultTriangle}] 
$(i)$ Since $S_n$ is generated by the transposition $(1\textbf{ }2)$ and the $n$-cycle $(1\textbf{ }2 \textbf{ }\cdots\text{ }n)$ and these are mapped to by $\gamma_0$ and $\gamma_1$ respectively, $\varphi$ is a surjective map. Now, notice if $n \geq 5$, then $\frac{1}{2} + \frac{1}{n} + \frac{1}{n-1} < 1$. In that case, $\Delta(2, n, n-1)$ corresponds to a tiling of the hyperbolic plane. And so, it follows that $\Delta(2, n, n-1)$ is an infinite group \cite{shurman1997geometry}. For the cases with $n < 5$ we go one by one. If $n =2 $, then $\Delta(2, 2, 1)$ is the cyclic group $C_2 \simeq S_2$. If $n = 3$, then $\Delta(2, 3, 2)$ is the dihedral group $D_3 \simeq S_3$. And finally, if $n = 4$, then $\Delta(2, 4, 3)$ is the octahedral group, which is isomorphic to $S_4$ \cite{clark2019algebraic}. \\ 
$(ii)$ Since $\Gamma = \varphi^{-1}(G)$ and $\Gamma^\prime = \kernel$ are subgroups of $\Delta(2, n, n-1)$, by \Cref{triangleFuchsian} both $\Gamma, \Gamma^\prime$ are Fuchsian groups.  \\ 
$(iii)$ Consider the restriction $$\varphi \Big\vert_{\Gamma} \colon \Gamma \to G.$$ 
By the first isomorphism theorem,  $G \simeq \Gamma / \operatorname{ker}\left(\varphi \Big\vert_{\Gamma}\right)$, and since $\Gamma^\prime = \ker\varphi = \operatorname{ker}\left(\varphi \Big\vert_{\Gamma}\right)$, we have $G \simeq \Gamma / \Gamma^\prime$. Consequently, given that both $\Gamma, \Gamma^\prime$ are Fuchsian groups, by \Cref{normalIntersection}, it then follows that 

\begin{equation*}
    G \simeq \Gamma / \Gamma^\prime \simeq \mon,
\end{equation*}

where $\pi \colon \Gamma^\prime \backslash \UH \to \Gamma \backslash \UH$.
\end{proof}

\begin{remark}
One may observe that the homomorphism $\varphi$ in \Cref{triangleSection} is really a projection map $F_2/N \to F_2/K$, where $N$ and $K$ are the normal closures of the relations defining $\Delta(2, n, n-1)$ and $S_n$, respectively. Hence, the kernel of this map can be explicitly computed as $K/N$. 

\end{remark}

\begin{section}{Acknowledgements}
The authors would like to thank Dr. Edray Goins, Dr. Duane Cooper and Dr. Alexander Barrios for providing support and guidance during our research project. We would also like to thank the National Science Foundation Grant No. DMS-1659138, the Alfred P. Sloan Foundation, Grant No. G-2017-9876 and the  Mathematical Sciences Research Institute (MSRI) for supporting our research.
 
\end{section}

\bibliographystyle{plain}
\bibliography{references.bib}

\end{document}